\newtheoremstyle{standard}
{16pt} 
{16pt} 
{} 
{} 
{\bfseries}
{} 
{ } 
{{\thmname{#1~}}{\thmnumber{#2.}}\thmnote{~(#3)}} 
\newtheoremstyle{kursiv}
{16pt} 
{16pt} 
{\itshape} 
{} 
{\bfseries}
{} 
{ } 
{{\thmname{#1~}}{\thmnumber{#2.}}\thmnote{~(#3)}} 
\theoremstyle{standard}
\newtheorem{defn}{Definition}[section]
\newtheorem{rem}[defn]{Remark}
\theoremstyle{kursiv}
\newtheorem{prop}[defn]{Proposition}
\newtheorem{cor}[defn]{Corollary}
\newtheorem{lem}[defn]{Lemma}
\newcommand{\R}{\ensuremath{\mathbb{R}}}
\newcommand{\N}{\ensuremath{\mathbb{N}}}
\newcommand{\SSS}{\ensuremath{\mathbb{S}}}
\newcommand{\coloneq}{\colonequals}
\DeclareMathOperator{\Diff}{Diff}
\DeclareMathOperator{\id}{id}
\DeclareMathOperator{\Lor}{\mathrm{SO}^+ (3,1)}
\DeclareMathOperator{\NU}{NU}
\newcommand{\z}{\mathbf{z}}
\begin{document}
\title{On the unit component of the Newman-Unti group}
\author{Alexander Schmeding\footnote{Nord universitetet i Levanger, Norway: \href{mailto:alexander.schmeding@nord.no}{alexander.schmeding@nord.no}}}
\date{}
{\let\newpage\relax\maketitle}

\begin{abstract}
	In this short note we identify the unit component of the Newman--Unti (NU) group in the fine very strong topology. In previous work, this component has been endowed with an infinite-dimensional Lie group structure, while the full NU-group does not support such a structure. 
\end{abstract}


\textbf{Keywords:} Newman--Unti group, asymptotically flat space-time, infinite-dimensional Lie group 
\medskip

\textbf{MSC2020:}
22E66 (primary), 
22E65 (secondary); 
\\[1em]

In \cite{Prinz_Schmeding_1, Prinz_Schmeding_2}, the Lie group structure of the Bondi--Metzner--Sachs (BMS) group and the Newman--Unti (NU) group as well as certain related groups were studied from the viewpoint of infinite-dimensional geometry. Both groups appear in the theory of asymptotically flat space times in the sense of Bondi, Van der Burg, Metzner and Sachs. On the infinitesimal level, the Lie algebra of the more widely studied BMS group identifies as s subalgebra of the Lie algebra of the NU group (see \cite{Barnich:2009se,Barnich:2010eb}). The global level is more delicate, as only the unit component of the NU group can be turned into a Lie group in a suitable function space topology. It is thus of interest to identify this component of the NU group. To this end, recall that the NU group is a semidirect product of a certain set of smooth functions with the orthochronous Lorentz group
\begin{equation}\label{eq:NUsemidirect}
\NU = \mathcal{N} \rtimes \Lor  \, \text{ with } \mathcal{N} \subseteq C^\infty (\R \times \SSS^2).
\end{equation}
The set of functions $\mathcal{N}$ can be endowed with a variety of function space topologies (derived from an ambient space of smooth functions). We have shown in \cite{Prinz_Schmeding_2}, that the usual function space topologies do not turn the NU group into an (infinite-dimensional) Lie group. However, the unit component of the NU group can be turned into an infinite-dimensional Lie group with respect to the so called fine very strong topology. 

In loc.cit. we then mentioned en passant that the unit component of the NU group coincides with the subgroup $\NU_c = \mathcal{N}_c \times \Lor$ of $\NU$, where
$$\mathcal{N}_c \coloneq \{F \in \mathcal{N} \mid \exists K \subseteq \R \times \SSS^2 \text{ compact, s.t.} F(t,\z) = t \ \forall (t,\z) \not \in K \}.$$
While this is not hard to prove the argument is somewhat involved and it is thus the goal of this note to supply the argument.

\textbf{Acknowledgements:} The author wishes to thank H.~Gl\"{o}ckner for the question which led to this note. Moreover, he thanks D.~Prinz and H.~Gl\"{o}ckner for helpful comments on the draft.

\section{Identifying the unit component}

Let us note first that the problem reduces to a question about the function space $\mathcal{N}$. As a semidirect product of Lie groups is, topologically, a product of the underlying topological spaces, the unit component is given by the product of the connected components containing the parts of the unit in the product decomposition. Now as the orthochronous Lorentz group is connected, the semidirect product structure becomes irrelevant and it suffices to identify the connected component containing the unit element in the subgroup $\mathcal{N}$. The topology at hand is here given by the subspace topology induced by the fine very strong topology. What we will now set out to prove is that this component coincides with all mappings in $\mathcal{N}$ differing from the unit element only on a compact set. For the readers convenience we shall now first recall the definitions of the sets involved.

Let $\Diff^+ (\R)$ be the group of all smooth orientation-preserving diffeomorphisms of $\R$ (recall that a diffeomorphism of $\R$ is orientation-preserving if it has positive derivative everywhere). We note that the orientation preserving diffeomorphisms of the real line admit a global manifold chart (see \cite{Gl05} for details concerning the construction). It is given by the map
$$\kappa \colon \Diff^+ (\R) \rightarrow C^\infty (\R,\R),\quad \kappa(\phi) = \phi-\id_\R,$$
which identifies the diffeomorphism with the open convex set $$C\coloneq \{f \in C^\infty (\R,\R) \mid f'(t)>-1, \forall t \in \R\}.$$
As translation with a fixed function induces a homeomorphism of $C^\infty (\R,\R)$ in the fine very strong topology (cf.\ \cite{HaS17}), this implies that $\Diff^+ (\R)$ is an open convex subset of $C^\infty (\R,\R)$.

\begin{defn} \label{defn:mathcalN}
 Define $\mathcal{N} \coloneq \left\{F \in C^\infty (\R \times \SSS^2) \mid F(\cdot, \z) \in \Diff^+(\R) , \quad \forall \z \in \SSS^2\right\}$. Then $\mathcal{N}$ becomes a group with respect to the product
 $$F \cdot G (u,\z) \coloneq F(G(u,\z),\z).$$
 The unit of the product is the map $p \colon \R \times \SSS^2 \rightarrow \R, (t,\z) \mapsto t$ and the group inverse $F^{-1}\colon \R \times \SSS^2\rightarrow \R$ is for $\z \in \SSS^2$ given by $F(\cdot, \z)^{-1}$. Here for every $\z$, the inverse is the unique smooth map satisfying the implicit equation 
 \begin{align}\label{NU:imp:eq}
  t = F(F^{-1}(t,\z),\z) \qquad (t,\z) \in \R \times \SSS^2.
 \end{align}
\end{defn}

We now topologise $\mathcal{N}$ with the subspace topology induced by the fine very strong topology on $C^\infty (\R \times \SSS^2,\R)$ (we do not recall its definition here, but see \cite{Prinz_Schmeding_2} or \cite{HaS17}.
Note that the fine very strong topology turns the set 
$$\mathcal{N}_c \coloneq \{F \in \mathcal{N} \mid \exists K \subseteq \R \times \SSS^2 \text{ compact, s.t.} F(t,\z) = t \ \forall (t,\z) \not \in K\}$$
into an open subset which contains the unit $p$. Recall from the definition of the fine very strong topology (see e.g.\ \cite{HaS17}) that the connected component of the unit in $\mathcal{N}$ contains only functions which differ at most on a compact subset from the unit $p$. Hence, the connected component $\mathcal{N}^\text{fs}_0$ of the unit is contained in $\mathcal{N}_c$. Therefore, it suffices to prove that $\mathcal{N}_c$ is (path)connected. To establish the claimed connectedness, we need the following lemma, whose proof can be found in \cite[Lemma C.3]{AaS19}:\footnote{The cited lemma is stated in loc.cit. only for manifolds without boundary. Inspecting the proof, it is easy to see that it carries over without any changes to the case of a manifold with smooth boundary (or even more general to manifolds with rough boundary).}

\begin{lem}\label{aux1}
 Let $M$ be a manifold (possibly infinite-dimensional or with smooth boundary) and $X$ a smooth locally compact manifold. Let $f \colon M \times X \rightarrow \R^n, n \in \N$ be smooth such that $f$ vanishes outside $M \times K$ for $K \subseteq X$ compact. Then the adjoint map $f^\wedge \colon M \rightarrow C^\infty_c (X,\R^n), m \mapsto f(m,\cdot)$ is smooth, where the target space $C^\infty_c(X,\R^n) \coloneq \{f \in C^\infty (X,\R^n) \mid \exists L \subseteq X \text{ compact s.t. } f|_{X\setminus L}\equiv 0\}$ is an open subset of $C^\infty (X,\R^n)$ in the fine very strong topology.
\end{lem}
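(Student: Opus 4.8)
The plan is to reduce the statement to the classical exponential law for smooth maps valued in spaces of smooth functions carrying the \emph{compact-open $C^\infty$ topology}, and then to upgrade the topology on the target to the fine very strong one by exploiting that every function in the image of $f^\wedge$ is supported in the single fixed compact set $K$.

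First I would record that, since $f$ vanishes outside $M \times K$, each function $f^\wedge(m) = f(m,\cdot)$ satisfies $\supp f^\wedge(m) \subseteq K$. Hence $f^\wedge$ factors through the closed linear subspace
$$C^\infty_K (X,\R^n) \coloneq \{g \in C^\infty(X,\R^n) \mid \supp g \subseteq K\} \subseteq C^\infty_c(X,\R^n).$$
Because $X$ is locally compact it is finite-dimensional and $\sigma$-compact, so the classical exponential law for Bastiani-smooth maps applies to the finite-dimensional factor $X$: smoothness of $f\colon M\times X \to \R^n$ is equivalent to smoothness of $f^\wedge\colon M \to (C^\infty(X,\R^n),\tau_{co})$, where $\tau_{co}$ is the topology of uniform convergence of the map together with all its derivatives on compact subsets of $X$. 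This already yields smoothness of $f^\wedge$, but only into the \emph{coarser} topology $\tau_{co}$.

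The central step is the comparison of topologies on the subspace $C^\infty_K(X,\R^n)$. I would show that the subspace topology induced on $C^\infty_K$ by the fine very strong topology coincides with the one induced by $\tau_{co}$. The point is that a basic fine very strong neighbourhood prescribes derivative bounds on each member of a locally finite exhaustion of $X$ by compacta; for functions whose support is confined to the fixed compact set $K$, all but finitely many of these constraints are conditions on a region where the relevant functions vanish identically and are therefore automatically satisfied. Consequently the neighbourhood filter collapses to that of $\tau_{co}$, and $C^\infty_K$ carries one and the same Fr\'echet topology whether regarded as a subspace of $(C^\infty(X,\R^n),\tau_{co})$ or of $C^\infty(X,\R^n)$ in the fine very strong topology. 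In particular the inclusion $C^\infty_K \hookrightarrow C^\infty_c$ is a topological embedding onto a closed subspace.

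Finally I would assemble the pieces. Since $f^\wedge$ is smooth into $(C^\infty(X,\R^n),\tau_{co})$ with image in $C^\infty_K$, and since $C^\infty_K$ is a closed vector subspace, the corestriction $f^\wedge\colon M \to C^\infty_K$ is smooth for the Fr\'echet topology of $C^\infty_K$; by the topology comparison this is exactly the subspace topology inherited from the fine very strong topology. As $C^\infty_c(X,\R^n)$ is an open subset of $C^\infty(X,\R^n)$ in the fine very strong topology and contains the image of $f^\wedge$, smoothness into $C^\infty_c$ follows. The hard part will be the topology comparison on $C^\infty_K$ and verifying that Bastiani smoothness—the existence and continuity of all iterated directional derivatives—is genuinely preserved in passing from $\tau_{co}$ to the finer fine very strong subspace topology; this is precisely where the fixed compact support $K$ is indispensable, since without it the two topologies differ and the conclusion breaks down.
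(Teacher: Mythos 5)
Your proof is correct and follows essentially the same route as the paper's own source for this lemma (the paper does not prove it but cites \cite[Lemma C.3]{AaS19}, whose argument is exactly yours): the exponential law for the compact-open $C^\infty$ topology, combined with the observation that on $C^\infty_K(X,\R^n)$ the fine very strong topology collapses to the compact-open $C^\infty$ topology because the fine constraint is vacuous there and only finitely many members of a locally finite family of compacta meet $K$. One slip worth correcting: local compactness of $X$ forces finite-dimensionality but not $\sigma$-compactness (consider the long line or an uncountable disjoint union of lines); fortunately $\sigma$-compactness is not needed, since the exponential law in Bastiani calculus requires only that $X$ be locally compact, and the Fr\'{e}chet property of $C^\infty_K(X,\R^n)$ follows from compactness of $K$ alone.
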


\begin{prop}\label{prop:pathcon}
 The set $\mathcal{N}_c$ is (path-)connected in $\mathcal{N}$ with respect to the fine very strong topology.
\end{prop}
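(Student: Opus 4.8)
The plan is to connect every $F \in \mathcal{N}_c$ to the unit $p$ by the straight-line homotopy in the global chart $\kappa$, and to extract continuity in the fine very strong topology from \cref{aux1}. Since $p \in \mathcal{N}_c$, path-connectedness follows once each element is joined to $p$. So fix $F \in \mathcal{N}_c$ and choose a compact set $K \subseteq \R \times \SSS^2$ with $F(t,\z) = t$ for all $(t,\z) \notin K$. Set $f \coloneq F - p \in C^\infty(\R \times \SSS^2, \R)$; then $f$ vanishes outside $K$, and for every $\z \in \SSS^2$ one has $\kappa(F(\cdot,\z)) = f(\cdot,\z) \in C$. I would then define
\[ \gamma \colon [0,1] \to C^\infty(\R \times \SSS^2, \R), \qquad \gamma(s)(t,\z) \coloneq t + s\, f(t,\z), \]
so that $\gamma(0) = p$ and $\gamma(1) = F$.

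First I would check that $\gamma$ takes values in $\mathcal{N}_c$. For each fixed $s$ the function $s\, f$ again vanishes outside the single compact set $K$, so $\gamma(s)$ differs from $p$ only on $K$; it therefore suffices to see that $\gamma(s) \in \mathcal{N}$, i.e.\ that $\gamma(s)(\cdot,\z) \in \Diff^+(\R)$ for every $\z$. In the chart this means $s\, f(\cdot,\z) \in C$, which holds because $C$ is convex and contains $0 = \kappa(\id_\R)$: the point $s\, f(\cdot,\z)$ lies on the segment from $0$ to $f(\cdot,\z) \in C$. Concretely, $\partial_t (s\, f)(t,\z) = s\, \partial_t f(t,\z) > -1$, since $\partial_t f(t,\z) > -1$ and $s \in [0,1]$ imply $s\, \partial_t f(t,\z) \geq \min\{0, \partial_t f(t,\z)\} > -1$. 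Hence $\gamma(s) \in \mathcal{N}_c$ for every $s$.

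The hard part will be the continuity of $\gamma$ in the fine very strong topology, where the straight-line homotopy is a priori suspect because this topology also controls behaviour at infinity. This is exactly what \cref{aux1} resolves. I would apply it with $M = [0,1]$ (a manifold with smooth boundary, covered by the footnote), $X = \R \times \SSS^2$ (a finite-dimensional, hence locally compact, smooth manifold), $n = 1$, and the smooth map $g(s,t,\z) \coloneq s\, f(t,\z)$, which vanishes outside $M \times K$. The lemma then yields that $s \mapsto s\, f = g(s,\cdot)$ is smooth, in particular continuous, as a map into $C^\infty_c(\R \times \SSS^2, \R)$, an open subset of $C^\infty(\R \times \SSS^2, \R)$. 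Composing with the translation $h \mapsto p + h$, which is a homeomorphism of $C^\infty(\R \times \SSS^2, \R)$ in the fine very strong topology, shows that $\gamma$ is continuous into $C^\infty(\R \times \SSS^2, \R)$. Since its image lies in $\mathcal{N}_c$ and the latter carries the subspace topology, $\gamma$ is a continuous path in $\mathcal{N}_c$ from $p$ to $F$, which completes the argument.
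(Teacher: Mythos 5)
Your proof is correct and follows essentially the same route as the paper: a straight-line (convex-combination) path, membership in $\mathcal{N}$ via convexity of the chart image $C$, and continuity via \Cref{aux1} followed by translation by $p$. The only cosmetic difference is that you join each $F$ to the unit $p$, whereas the paper joins two arbitrary elements $F_0, F_1$ directly by $sF_1 + (1-s)F_0$; both yield path-connectedness.
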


\begin{proof} 
Consider $F_0,F_1 \in \mathcal{N}_c$ and let us construct a continuous path from $F_0$ to $F_1$. For this, we study the mapping 
$$
c \colon [0,1] \times (\R \times \SSS^2) \rightarrow \R, \quad (s,(t,\z))\mapsto sF_1 (t,\z) + (1-s)F_0(t,\z)-t. 
$$
By construction $c$ is smooth. Moreover, we observe that since $F_0,F_1 \in \mathcal{N}_c$, there exist compact sets $K_i, i=0,1$ such that $F_i(t,\z)=t$ for all $(t,\z)\not \in K_i$. In particular, for every $(t,\z) \not \in L\coloneq K_1 \cup K_2$ we have $c(s,t,\z) = 0$. Thus we invoke \Cref{aux1} to obtain a smooth (and thus continuous) path
$$c^\wedge \colon [0,1] \rightarrow C^\infty_c (\R\times \SSS^2,\R),\quad t \mapsto c(t,\cdot).$$
Addition of a function induces a homeomorphism of $C^\infty(\R\times \SSS^2,\R)$ to itself when endowed with the fine very strong topology (note that $C^\infty(\R\times \SSS^2,\R)$ is disconnected in this topology whence not a locally convex vector space). We deduce that $\gamma \colon [0,1] \rightarrow C^\infty (\R\times \SSS^2,\R), s \mapsto c^\wedge (s,\cdot) + p$ (with $p$ being the unit of $\mathcal{N}$) is a continuous path. 

We have to prove now that $\gamma$ takes its image in $\mathcal{N}_c$. For this it is sufficient to prove that $\gamma$ takes its values in $\mathcal{N}$. By construction for every fixed $\z \in \SSS^2$ we obtain the map $c (\cdot,\z)^\wedge \colon [0,1] \rightarrow C^\infty_c (\R, \R), s \mapsto c(s,\cdot,\z)$ is continuous in the parameter $s$ and a convex combination of the diffeomorphisms $F_0$ and $F_1$. Since $\Diff^+(\R)$ is convex, we see that $c(t,\cdot ,\z) \in \Diff^+(\R)$ as $F_0,F_1$ are contained in this set. Thus \Cref{defn:mathcalN} implies that $\gamma(t) \in \mathcal{N}$ and thus $\mathcal{N}_c$ is path connected.
 \end{proof}

 \begin{cor}
  The unit component of the group $\mathcal{N}$ coincides with $\mathcal{N}_c$.
 \end{cor}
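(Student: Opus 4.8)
The plan is to combine the two facts already assembled above: the inclusion $\mathcal{N}^\text{fs}_0 \subseteq \mathcal{N}_c$ obtained from the structure of the fine very strong topology, and the path-connectedness of $\mathcal{N}_c$ established in \Cref{prop:pathcon}. Once these are in hand, the corollary follows formally from the characterisation of the connected component of a point as the largest connected set containing it, so the proof amounts to bookkeeping rather than new analysis.

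Concretely, I would argue as follows. As recalled just before \Cref{prop:pathcon}, every function in the connected component $\mathcal{N}^\text{fs}_0$ of the unit $p$ differs from $p$ only on a compact set, which gives $\mathcal{N}^\text{fs}_0 \subseteq \mathcal{N}_c$. For the reverse inclusion, I would observe that $p \in \mathcal{N}_c$ (take $K = \emptyset$ in the defining condition) and that $\mathcal{N}_c$ is connected, being path-connected by \Cref{prop:pathcon}. Since the connected component $\mathcal{N}^\text{fs}_0$ is by definition the maximal connected subset of $\mathcal{N}$ containing $p$, any connected subset containing $p$ must lie inside it; applying this to $\mathcal{N}_c$ yields $\mathcal{N}_c \subseteq \mathcal{N}^\text{fs}_0$. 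The two inclusions together give $\mathcal{N}_c = \mathcal{N}^\text{fs}_0$, which is the assertion. By the reduction carried out at the start of the section, the corresponding statement for the full group $\NU$ then follows at once, since $\Lor$ is connected and the unit component of a semidirect product decomposes as the product of the unit components of its factors.

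I do not anticipate a genuine obstacle here: all the substantive content has already been discharged in \Cref{prop:pathcon} and in the earlier analysis of the fine very strong topology. The only points meriting a moment's attention are the trivial membership $p \in \mathcal{N}_c$ and the fact that "unit component" is to be read as the connected component of the unit rather than merely the path component. Since $\mathcal{N}_c$ is in fact path-connected, the path component and the connected component of $p$ coincide on it, so this reading introduces no ambiguity and no further argument is required.
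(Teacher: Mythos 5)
Your argument is correct and matches the paper's own reasoning exactly: the paper establishes the inclusion $\mathcal{N}^\text{fs}_0 \subseteq \mathcal{N}_c$ from the definition of the fine very strong topology and then reduces the corollary to the path-connectedness of $\mathcal{N}_c$, proved in \Cref{prop:pathcon}, with the reverse inclusion following from the maximality of the connected component just as you describe. No gaps; your remarks on $p \in \mathcal{N}_c$ and on path components versus connected components are sound bookkeeping.
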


\begin{rem}
 In \cite{Prinz_Schmeding_2} $\mathcal{N}_c$ was turned into a Lie group, these techniques generalise (without essential changes) also to similar groups constructed from mappings in $C^\infty (\R^n \times \SSS^2,\R^n)$ (the author is not aware of any physical relevance of these groups). For these generalised groups the identification of the unit component presented in this note does not work as the diffeomorphism group $\Diff^+(\R^n) $ is not known to be convex (beyond the special case $n=1$).
\end{rem}

\addcontentsline{toc}{section}{References}
\bibliography{unit_rem}
\end{document}